\DeclareMathAlphabet{\mathpzc}{OT1}{pzc}{m}{it}
\DeclarePairedDelimiter{\ceil}{\lceil}{\rceil}
\DeclareMathOperator*{\argmax}{arg\,max}
\newcommand{\R}{\mathbb{R}}
\newcommand{\mat}[1]{\mathbf{{#1}}}
\renewcommand{\vec}[1]{{\mathchoice
                     {\mbox{\boldmath$\displaystyle{#1}$}}
                     {\mbox{\boldmath$\textstyle{#1}$}}
                     {\mbox{\boldmath$\scriptstyle{#1}$}}
                     {\mbox{\boldmath$\scriptscriptstyle{#1}$}}}}
\newcommand{\V}{\mathcal{V}}
\newcommand{\pow}{{\large\text{$\mathpzc{P}$}}}
\newcommand{\deriv}[3]{\Delta_{{#1}}({#2}\,|\,{#3})}
\newcommand{\eps}{\varepsilon}
\newcommand{\dparpr}{\vec{m}_\text{pr}}
\newcommand{\prcov}{\mat{\Gamma}_\text{pr}}
\newcommand{\ncov}{\mat{\Sigma}}
\newcommand*{\defeq}{\mathrel{\rlap{%
                     \raisebox{0.3ex}{$\m@th\cdot$}}%
                     \raisebox{-0.3ex}{$\m@th\cdot$}}%
                     =}
\newcommand{\Vcard}{n}
\newcommand{\mysidenote}[1]{\sidenote{\small\it{\color{blue}{#1}}}}
\newcommand{\derivtwo}[4]{\Delta_{#3}\Delta_{#2} #1(#4)}
\begin{document}

\maketitle

\section{Introduction}
Let $V$ be a finite set with $\Vcard$ elements.\mysidenote{In the present 
context, $V$ is typically referred to as the ground set.}
For $k \in \{1, \ldots, \Vcard\}$, 
we let  
\begin{equation}\label{equ:Vk}
\V_k \defeq \{ A \in \pow(V) : |A| = k \},
\end{equation}
denote the collection of subsets of $V$ that have $k$ 
elements.\mysidenote{Here, $\pow(V)$ denotes the power set of $V$. 
Also, for $A \subseteq V$, $|A|$ denotes its cardinality.}
In this brief note, we consider 
optimization problems of the form 
\begin{equation}\label{equ:optim_basic}
\max_{S \in \V_k} f(S),
\end{equation}
where $f:\pow(V)\to \R$ is a non-negative monotone submodular function 
with the property that $f(\emptyset) = 0$.
Solving such problems by an exhaustive search is extremely challenging. 
This would require $\binom{\Vcard}{k}$ evaluations of $f$, which 
is prohibitive even for modest values of $\Vcard$ and 
$k$.\mysidenote{For example, we note $\binom{80}{20} = \mathcal{O}(10^{18})$.}
In this note, we discuss approximate solution of such problems using 
the greedy method and some of its variants.

We start our discussion in Section~\ref{sec:background}, where we outline the
requisite background concepts and notation.  In that section, we view the notion
of the marginal gain as a discrete derivative and discuss a discrete analogue of
the Fundamental Theorem of Calculus.  

In Section~\ref{sec:submodular_functions}, we 
discuss submodular functions, a couple of equivalent characterizations of 
submodularity, and two basic results on operations that preserve submodularity.
Subsequently, in Section~\ref{sec:submodular_examples}, we
provide further insight into submodularity by considering a few examples of
submodular functions. 

The greedy method is studied in Section~\ref{sec:greedy}. We also discuss
a well-known theoretical guarantee for the greedy algorithm in the case of
monotone submodular functions. Finally, in Section~\ref{sec:variants}, we
consider two accelerated variants of the greedy algorithm.

\section{Preliminaries}
\label{sec:background}
Let $V$ be a finite set with $|V| = n$.
Consider a set function 
$f:\pow(V) \to \R$. We will always assume $f(\emptyset) = 0$. 
We first define the notion of monotonicity. 
\begin{definition}
The function $f$ is said to be \emph{monotone}, 
if for every 
$A$ and $B$ in $\pow(V)$ such that $A \subseteq B$, we have 
$f(A) \leq f(B)$. 
\end{definition}
We next consider 
the notion of the \emph{marginal gain}, which is also 
known as the \emph{discrete derivative}~\cite{KrauseGolovin14}. 
For $A \subseteq V$ and $v \in V$, the marginal 
gain of $f$ at $A$ with respect to $v$ is defined as 
\[
\deriv{f}{v}{A} \defeq f(A \cup \{ v\}) - f(A).
\]
It is straightforward to show that 
$f:\pow(V) \to \R$ is monotone if and only if $\deriv{f}{v}{A} \geq 0$ for every $A
\subseteq V$ and $v \in V$.

The following result, which 
may be viewed as
a discrete analogue of the Fundamental Theorem of Calculus,\mysidenote{
Let $f\in C^1([a,a+h])$, where $a\in\R$ and $h>0$. By the Fundamental
Theorem of Calculus,
\[
f(a+h)
=
f(a)+\int_a^{a+h}f'(t)\,dt.
\]
}
will be useful in the discussions that follow.
\begin{lemma}
\label{lem:discrete-ftc}
Let $A,H\in\pow(V)$ and suppose $H=\{h_1,\ldots,h_k\}$ with $k \leq |V|$.
Then,
\begin{equation}\label{equ:ftc}
f(A\cup H) = f(A) +
\sum_{j=1}^k
\deriv{f}{h_j}{A\cup\{h_1,\ldots,h_{j-1}\}}.
\end{equation}
\end{lemma}
\begin{proof}
Let $A_0 = A$ and 
$A_j= A \cup \{h_1,\ldots,h_j\}$, for $j \in \{1, \ldots, k\}$.
We have 
\[
f(A_j)-f(A_{j-1})
=
\deriv{f}{h_j}{A_{j-1}},
\qquad j\in\{1,\ldots,k\}.
\]
Summing these identities over $j$ yields
$f(A_k)-f(A_0) = \sum_{j=1}^k \deriv{f}{h_j}{A_{j-1}}$.
The result follows by noting that 
$A_0=A$ and $A_k=A\cup H$.
\end{proof}

\section{Submodular functions}\label{sec:submodular_functions}
We begin by stating the definition of a \emph{submodular function}.
\begin{definition}\label{def:submodular} 
Consider a set function $f:\pow(V) \to \R$. We say $f$ is submodular, if for every 
$A \subseteq V$ and 
$B \subseteq V$ such that $A \subseteq B$,
\begin{equation}\label{equ:submodular}
    \deriv{f}{v}{A} \geq \deriv{f}{v}{B}, \quad \text{whenever } v \in V \setminus B.
\end{equation}
\end{definition} 
This definition has an intuitive interpretation---this 
is a diminishing return property.\mysidenote{The following 
figure provides an illustration of submodularity.
\includegraphics[width=0.35\textwidth]{./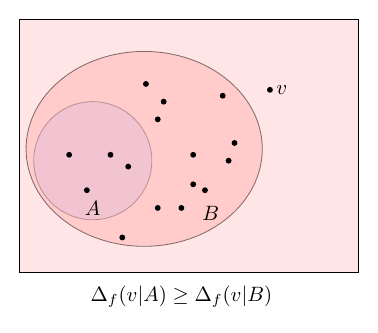}}
To make matters concrete, 
suppose the elements of $V$ correspond to a set of experiments 
and suppose $f:\pow(V)\to\R$ assigns a utility to each 
subset of $V$. Then, the above definition states that 
if the experiments in $A$ are conducted, the marginal utility 
of performing the experiment $v \in V \setminus B$ does not increase if we 
also perform the experiments in $B \setminus A$.

The following result provides an \emph{incremental} characterization 
of submodularity, which is sometimes convenient to use. 
This result is well-known; see, e.g.,~\cite{Schrijver2003}. 
The proof is straightforward and is provided for completeness. 
\begin{proposition}\label{prp:incremental-diminishing-returns}
Let $V$ be a finite set and let $f:\pow(V)\to\R$. 
The following statements are equivalent:
\begin{enumerate}[label=(\roman*)]
\item The function $f$ is submodular.
\item For every $S\subseteq V$ and every pair of distinct elements
$v,w\in V\setminus S$,
\[
\deriv{f}{v}{S}
\geq
\deriv{f}{v}{S\cup\{w\}}.
\]
\end{enumerate}
\end{proposition}
\vspace{-0.75\baselineskip}
\begin{proof}
The implication $(i)\Rightarrow(ii)$ follows by taking
$A=S$ and $B=S\cup\{w\}$ in~\eqref{equ:submodular}.
Conversely, suppose (ii) holds, and let
$A\subseteq B\subseteq V$ and $v\in V\setminus B$. 
If $A=B$, then~\eqref{equ:submodular} holds trivially. Thus, assume
$A$ is a proper subset of $B$. Assume that $B \setminus A$ has $k \geq 1$ 
elements,
$B\setminus A=\{b_1,\ldots,b_k\}$.
Define
$A_0=A$ and $A_j=A\cup\{b_1,\ldots,b_j\}$ for $j \in 
\{1,\ldots,k\}$.
Note that for each $j\geq 1$, the elements $v$ and $b_j$ are distinct and belong to
$V\setminus A_{j-1}$. Therefore,
\[
\deriv{f}{v}{A_{j-1}} \geq \deriv{f}{v}{A_{j-1}\cup\{b_j\}}
= \deriv{f}{v}{A_j}, \quad j \in \{1, \ldots, k\}.
\]
Applying this inequality successively yields,
\[
\deriv{f}{v}{A} = \deriv{f}{v}{A_0} \geq \deriv{f}{v}{A_1} \geq
\cdots \geq \deriv{f}{v}{A_k} = \deriv{f}{v}{B}.
\]
Thus, $f$ is submodular.
\end{proof}

\begin{remark}
Proposition~\ref{prp:incremental-diminishing-returns} admits an interesting 
interpretation in terms of second order discrete derivatives.
To make matters concrete, 
for $S\subseteq V$ and $v,w\in V$, we define the
mixed discrete derivative
\[
\derivtwo{f}{v}{w}{S} \coloneqq \deriv{f}{v}{S\cup\{w\}} - \deriv{f}{v}{S}.
\]
Equivalently,
$\derivtwo{f}{v}{w}{S} = f(S\cup\{v,w\}) - f(S\cup\{v\}) - f(S\cup\{w\}) + f(S)$.
Note that $\{\derivtwo{f}{v}{w}{S}\}_{v, w \in V}$ may be viewed as the entries of a \emph{discrete Hessian} matrix.
Subsequently, Proposition~\ref{prp:incremental-diminishing-returns}
states that $f$ is submodular if and only if
$\derivtwo{f}{v}{w}{S}\leq 0$
for every $S\subseteq V$ and all distinct $v,w\in V\setminus S$.
\end{remark}

Another well-known equivalent characterization of submodularity 
is provided by the following result~\cite{KrauseGolovin14};
its proof demonstrates the utility of Lemma~\ref{lem:discrete-ftc}.
\begin{proposition}\label{prp:submodular-lattice}
Let $V$ be a finite set and consider a function $f:\pow(V)\to\R$. 
Then, $f$ is submodular if and only if
\begin{equation}\label{equ:submodular_alt}
f(A\cap B)+f(A\cup B) \leq f(A)+f(B), \qquad \text{for all } A,B\in\pow(V).
\end{equation}
\end{proposition}
\vspace{-0.75\baselineskip}
\begin{proof}
Suppose first that $f$ is submodular in the sense of Definition~\ref{def:submodular}.
Let $A$ and $B$ be subsets of $V$.
If $A\subseteq B$, then
$A\cap B=A$ and $A\cup B=B$, in which case~\eqref{equ:submodular_alt} holds with equality.
Thus, we assume $A\setminus B\neq\emptyset$ and write
$A\setminus B=\{a_1,\ldots,a_k\}$.\mysidenote{
Keeping the following diagram in mind will be helpful in what follows. 
\includegraphics[width=0.35\textwidth]{./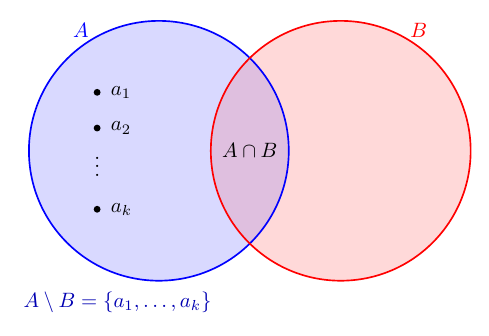}}  
Using Lemma~\ref{lem:discrete-ftc}, we have 
\[
f(A\cup B) = f(B) + \sum_{j=1}^k \deriv{f}{a_j}{B\cup\{a_1,\ldots,a_{j-1}\}}.
\]
Using Lemma~\ref{lem:discrete-ftc} again, and noting that
$A=(A\cap B)\cup(A\setminus B)$,
we have
\[
f(A) = f(A\cap B) + \sum_{j=1}^k \deriv{f}{a_j}{(A\cap B)\cup\{a_1,\ldots,a_{j-1}\}}.
\]

Note that, for each $j$, we have
$(A\cap B)\cup\{a_1,\ldots,a_{j-1}\} \subseteq B\cup\{a_1,\ldots,a_{j-1}\}$. 
Further, $a_j$ does not belong to the larger set. 
Hence, submodularity implies
\[
\deriv{f}{a_j}{(A\cap B)\cup\{a_1,\ldots,a_{j-1}\}} 
  \geq \deriv{f}{a_j}{B\cup\{a_1,\ldots,a_{j-1}\}}.
\]
Therefore, 
\[
\begin{aligned}
f(A)-f(A\cap B)
&=      \sum_{j=1}^k \deriv{f}{a_j}{(A\cap B)\cup\{a_1,\ldots,a_{j-1}\}}\\ 
&\geq   \sum_{j=1}^k  \deriv{f}{a_j}{B\cup\{a_1,\ldots,a_{j-1}\}}  
= f(A\cup B)-f(B).
\end{aligned}
\]
That is, $f(A)-f(A\cap B) \geq f(A\cup B)-f(B)$, which  
implies~\eqref{equ:submodular_alt}.

Conversely, suppose~\eqref{equ:submodular_alt} holds. Let
$S\subseteq T\subseteq V$ and let $v\in V\setminus T$. Apply
\eqref{equ:submodular_alt} with $A=S\cup\{v\}$ and $B=T$.
Note that $A\cap B=S$ and $A\cup B=T\cup\{v\}$. Thus,
\[
f(S)+f(T\cup\{v\}) \leq f(S\cup\{v\})+f(T).
\]
Therefore, $\deriv{f}{v}{S} \geq \deriv{f}{v}{T}$. Hence, $f$ is submodular.
\end{proof}

We end this section by listing a couple of useful results
regarding submodular functions. 
\begin{proposition}\label{prp:summation}
Let $V$ be a finite set, and consider 
submodular functions $f_i:\pow(V)\to\R$.
Further, let  
$\alpha_1,\ldots,\alpha_m$ be non-negative constants. Then, the function
$f\defeq\sum_{i=1}^m \alpha_i f_i$
is submodular.
\end{proposition}
\begin{proof}
Let $A\subseteq B\subseteq V$ and let $v\in V\setminus B$. Then,
\[
\deriv{f}{v}{A}
= \sum_{i=1}^m \alpha_i\deriv{f_i}{v}{A} \geq
\sum_{i=1}^m \alpha_i\deriv{f_i}{v}{B} = \deriv{f}{v}{B},
\]
where we used the submodularity of each $f_i$ and the fact that
$\alpha_i\geq 0$.
\end{proof}
The next result is useful in the 
context of stochastic submodular optimization. 
\begin{proposition}\label{prp:expectation}
Let $(\Omega,\mathcal F,\mathbb P)$ be a probability space. 
Consider a function $f:\Omega \times \pow(V) \to \R$.
Suppose
$f(\omega, \cdot):\pow(V)\to\R$ is submodular for every $\omega\in\Omega$ and 
that 
$f(\cdot, A)$ is integrable for every $A\subseteq V$. Define
the function $F:\pow(V) \to \R$ as 
\[
F(S)\defeq\mathbb E[f(\cdot, S)], \quad S \in \pow(V).
\]
Then, $F$ is submodular.
\end{proposition}
\begin{proof}
Note that for each $A \subseteq V$, 
\[
\begin{aligned}
\deriv{F}{v}{A} &= F(A\cup\{v\})-F(A) \\
&= \mathbb E\bigl[f(\cdot,A\cup\{v\})-f(\cdot,A)\bigr]\\
&=
\int_\Omega\deriv{f(\omega,\cdot)}{v}{A}\,\mathbb P(d\omega).
\end{aligned}
\]
Let $A\subseteq B\subseteq V$ and let $v\in V\setminus B$. 
Since
$f(\omega, \cdot)$ is submodular for every $\omega\in\Omega$,
\[
\deriv{f(\omega, \cdot)}{v}{A}
\geq
\deriv{f(\omega, \cdot)}{v}{B}, 
\quad \text{for all } \omega \in \Omega. 
\]
Therefore, 
\[
\deriv{F}{v}{A}=
\int_\Omega \deriv{f(\omega, \cdot)}{v}{A} \, \mathbb{P}(d\omega)\geq
\int_\Omega \deriv{f(\omega, \cdot)}{v}{B} \, \mathbb{P}(d\omega)
=
\deriv{F}{v}{B}.
\]
Thus, $F$ is submodular.
\end{proof}
The Propositions~\ref{prp:summation} and~\ref{prp:expectation} 
concern two basic operations that preserve
submodularity. For other such results, 
see, e.g.,~\cite{Fujishige2005,KrauseGolovin14}.

\begin{remark}
In Proposition~\ref{prp:expectation}, 
the assumption that $f(\omega,\cdot)$ is submodular for every
$\omega\in\Omega$ may be weakened by requiring that $f(\omega, \cdot)$ 
is submodular for \emph{almost all} $\omega \in \Omega$. 
\end{remark}

\section{Examples of submodular functions}\label{sec:submodular_examples}
In this section, we consider a couple of examples of submodular functions.

\subsection{Maximum-value function}
Let $V=\{1,\ldots,n\}$ and let $\vec{x}\in\R^n$ be a fixed vector with non-negative entries
that represent a given collection of values. 
Define $f:\pow(V)\to\mathbb{R}$ by
\begin{equation}\label{equ:maxval}
   f(S)=
   \begin{cases}
   \displaystyle \max_{j\in S} x_j, & S\neq\emptyset,\\
   0, & S=\emptyset.
   \end{cases}
\end{equation}
We refer to $f$ as the maximum-value function. 
\begin{lemma}\label{lem:maxval}
The maximum-value function is monotone and submodular.
\end{lemma}
\vspace{-0.75\baselineskip}
\begin{proof}
Monotonicity is immediate. Namely, if $\emptyset \neq A\subseteq B$, then the maximum over $A$ cannot
exceed the maximum over $B$. 
If $A=\emptyset$, $f(A) = 0 \leq f(B)$ for every $B \subseteq V$. 
To prove submodularity, let $A\subseteq B\subseteq V$ and $s\in V\setminus B$.
We have
\[
\begin{alignedat}{2}
f(A\cup\{s\})-f(A)
&= \max\{f(A),x_s\}-f(A) \\
&= \max\{0,x_s-f(A)\}\\ 
&\ge \max\{0,x_s-f(B)\} 
&\quad& \text{({\color{blue}Since $f(A)\le f(B)$})} \\
&= f(B\cup\{s\})-f(B). \qedhere
\end{alignedat}
\]
\end{proof}
This example provides a simple illustration of the diminishing returns property
encoded in submodularity.  Specifically, in the case of the maximum-value
function, adding a new element $s$ to a set $A$ only improves the maximum if
$x_s$ exceeds the current maximum value in $\{x_j\}_{j\in A}$.  However, the
current maximum value can only increase when passing from a set $A$ to a superset
$B$. Therefore, the marginal gain can only decrease.

\subsection{The facility function}
Let $V = \{1, \ldots, n\}$ and consider a matrix $\mat{M} \in \R^{m \times n}$ with 
non-negative entries.
Define the function $f:\pow(V) \to \R$ by 
\begin{equation}\label{equ:facility}
f(S) = \left\{
      \begin{alignedat}{2} 
          &\displaystyle\sum_{i=1}^m \displaystyle\big(\max_{j \in S} M_{ij} \big) 
             \quad &\text{if } &S \neq \emptyset, \\
          &0 
             \quad &\text{if } &S = \emptyset.
      \end{alignedat}
\right.
\end{equation}
This function, which is commonly referred to as the facility function, is associated with the
scenario where one seeks to open facilities in $n$ candidate locations to serve
$m$ customers. Here, $M_{ij}$ is the value provided by the
facility at the $j$th location to the $i$th customer. Assuming each customer
chooses the facility providing the highest value to them, $f$
quantifies the total value provided by a given configuration of facilities; 
see Figure~\ref{fig:facility} for an illustration.
\begin{figure}[ht]\centering
\includegraphics[width=.65\textwidth]{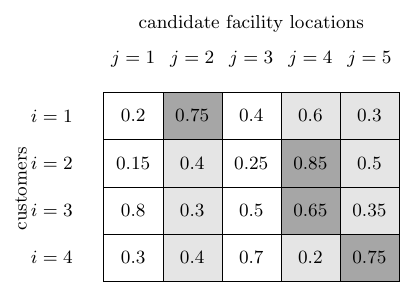}
\caption{Simple illustration of the facility function. 
Here, the facility configuration is $S = \{2, 4, 5\}$, 
corresponding to grayed columns. The dark gray cells show 
the facility selected by the $i$th customer, for $i \in \{1, 2, 3, 4\}$.
For the displayed configuration, 
$f(S)=0.75+0.85+0.65+0.75=3.00$.}
\label{fig:facility}
\end{figure}

The present facility function is a well-known example of a monotone 
submodular function~\cite{KrauseGolovin14}.  
This is detailed in the following result. 
\begin{proposition}
The function $f$ defined in~\eqref{equ:facility} is monotone and submodular.
\end{proposition}
\vspace{-0.75\baselineskip}
\begin{proof}
The monotonicity property is straightforward 
to note. To show submodularity,   
for each $i\in \{1, \ldots, m\}$, we consider $f_i:\pow(V) \to \R$ defined as 
\[
   f_i(S)=
   \begin{cases}
   \displaystyle\max_{j\in S} M_{ij}, & S\neq \emptyset,\\
   0, & S=\emptyset.
   \end{cases}
\]
Note that $f = \sum_{i=1}^m f_i$.
Furthermore, 
by Lemma~\ref{lem:maxval}, $f_i$ is 
submodular for each $i$. 
Hence, since submodularity is preserved under finite sums (cf.\ Proposition~\ref{prp:summation})
$f$ is submodular.
\end{proof}

\subsection{Expected information gain in linear Gaussian inverse problems}
Consider the inverse problem of estimating $\vec m \in \R^n$ from the
data model
\[
\vec{y} = \mat{F}\vec{m} + \vec{\eta},
\]
where $\vec y \in \R^d$ is a vector of sensor data,
$\mat{F}\in\R^{d\times n}$ is the forward operator, and
$\vec{\eta} \sim N(\vec{0}, \ncov)$ represents measurement noise. We assume a
Gaussian prior $N(\dparpr,\prcov)$ for $\vec m$, and assume that $\vec m$ and
$\vec\eta$ are independent. In the present setup, each row of $\mat{F}$
corresponds to a candidate sensor location.
In practice, due to budget constraints, sensors can be placed only at a subset
of the candidate locations. It is therefore important to identify an optimal
placement of sensors. 

Let $V = \{1,\ldots,d\}$ index the set of candidate sensor
locations. The optimal sensor placement problem seeks to find a subset
$S \subseteq V$ that provides maximal information gain.
For $S \subseteq V$, let $\mat{F}_S$ denote the restricted forward
operator obtained by retaining the rows of $\mat{F}$ with indices in $S$, and
let $\ncov_S$ denote the noise covariance matrix obtained by restricting
$\ncov$ to the rows and columns with indices in $S$.
A common objective in Bayesian optimal experimental design is to select $S
\subseteq V$ to maximize the expected information gain (EIG), defined as the
expected Kullback--Leibler divergence from the posterior distribution to the
prior distribution. For linear Gaussian inverse problems, the EIG admits the
closed-form expression
\[
\Psi(S)
=
\frac12
\log\det\!\big(
\mat{I}+\prcov^{1/2} \mat{F}_S^\top \ncov_S^{-1}\mat{F}_S\prcov^{1/2}
\big).
\]
Under the assumption of uncorrelated measurement errors, i.e., when $\ncov$ is
diagonal, $\Psi:\pow(V) \to \R$ is known to be a monotone submodular function;
see, e.g.,~\cite{AlexanderianMaio26}.  On the other hand, when the measurement
errors are correlated, submodularity can fail. This is illustrated next.

Consider the case where
\begin{equation}\label{equ:FSigma}
\mat{F} =
\begin{bmatrix}
1 & 1 & 1\\
2 & 1 & 0\\
0 & 1 & 2
\end{bmatrix}
\quad\text{and}\quad
\ncov =
\begin{bmatrix}
1 & 0 & 0\\
0 & 1 & \rho\\
0 & \rho & 1
\end{bmatrix},
\end{equation}
with $\rho \in (-1, 1)$ fixed. Also, we let the prior distribution be
$N(\vec{0}, \mat{I})$. In this case,
\[
\Psi(S)
=
\frac12\log\det\!\left(\mat{I} + \mat{F}_{\!S}^\top \ncov_{\!S}^{-1}
\mat{F}_{\!S} \right),
\qquad S \subseteq \{1, 2, 3\}.
\]

For $\rho = 1/2$, a direct calculation yields
\[
\begin{aligned}
\Psi(\{1,2\})-\Psi(\{1\})
&=
\frac12\log(15/4)
\approx 0.6609,
\\
\Psi(\{1,2,3\})-\Psi(\{1,3\})
&=
\frac12\log(24/5)
\approx 0.7843.
\end{aligned}
\]
That is,
$\deriv{\Psi}{2}{\{1\}}
<
\deriv{\Psi}{2}{\{1,3\}}$,
which violates submodularity. Here, the measurement errors of the second and
third sensors are correlated, and activating the third sensor enhances the
marginal gain of activating the second one.

For further insight, In Figure~\ref{fig:eigfig}, we plot 
$\deriv{\Psi}{2}{\{1\}}$ and $\deriv{\Psi}{2}{\{1,3\}}$ for 
$\rho \in (-0.9,0.9)$ in~\eqref{equ:FSigma}.
We observe that submodularity is violated once the magnitude of the correlation
between the errors of sensors 2 and 3 is sufficiently large.
\begin{figure}[ht]\centering
\includegraphics[width=.5\textwidth]{./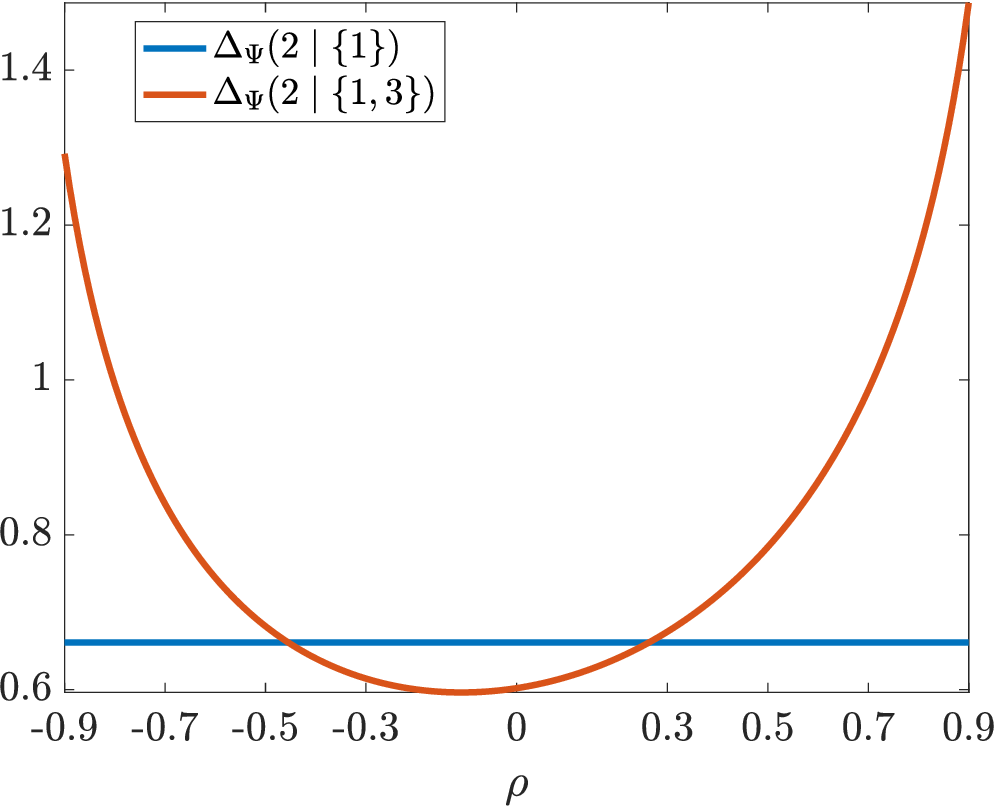}
\caption{The marginal gains $\deriv{\Psi}{2}{\{1\}}$ and $\deriv{\Psi}{2}{\{1,3\}}$ as the
correlation between measurement errors of the second and third sensors varies.}
\label{fig:eigfig}
\end{figure}

\section{The greedy method}
\label{sec:greedy}
Consider the optimization problem~\eqref{equ:optim_basic} and
assume $f$ is a monotone submodular function.  Note that, in general, one
considers optimization of $f$ over subsets of $V$ that have less than or equal
to $k$ elements. However, since $f$ is assumed monotone,  we consider the setup
in~\eqref{equ:optim_basic} to keep the discussion simple. 
A straightforward approach to approximately solving \eqref{equ:optim_basic} is the greedy method. 
In this approach, 
we begin with the empty set $S = \emptyset$, and in each iteration 
pick the element in $V \setminus S$ that provides the largest marginal gain. 

Specifically, the greedy algorithm applied to~\eqref{equ:optim_basic}
produces a finite collection of sets $\{S_l\}_{l=1}^k$ as follows: 
\begin{equation}\label{equ:greedy}
\left\{ 
\begin{alignedat}{2}
&S_0 &= &~\emptyset, \\
&S_{l} &= &~S_{l-1} \cup \{ \argmax_{v \in V \setminus S_{l-1}} \deriv{f}{v}{S_{l-1}} \},
\quad l= 1, \ldots, k.
\end{alignedat}
\right.
\end{equation}
The output of the greedy algorithm is the set $S = S_k$, which provides an 
approximate solution to~\eqref{equ:optim_basic}. 

For each $l \in \{1, \ldots, k\}$, let $v_l \in \argmax_v
\deriv{f}{v}{S_{l}}$.  That is $v_{l}$ is the element of $V$ that is
selected in the step $l$ of the greedy algorithm so that $S_{l} =
S_{l-1} \cup \{v_l\}$.  
Note that there might be more than one element in 
$V \setminus S_{l-1}$ that maximize the marginal gain at that step. 
In such cases some form of tie-break rule must be used. The manner in which 
this is done does not impact the analysis that follows. Next, 
note that for every $l \in \{0, \ldots, k-1\}$
and $v \in V$,
\begin{equation}\label{equ:greedy_basic_estimate}
f(S_{l+1}) - f(S_l) = f(S_l \cup \{v_{l+1}\}) - f(S_l) 
= \deriv{f}{v_{l+1}}{S_{l}} \geq \deriv{f}{v}{S_{l}}.
\end{equation}
This observation will be revisited shortly. 

The greedy algorithm 
is popular due to 
its simplicity. This is useful for example in the context of 
sensor placement, where one can place sensors in a greedy manner. 
In practice, this approach often provides near optimal sensor placements. 
For monotone submodular functions, this approach admits 
a theoretical guaranty. This is made precise in Theorem~\ref{thm:greedy}. 
This important 
result was proven in~\cite{NemhauserWolseyFisher78}. We 
provide a proof of this result for completeness. The 
idea behind the proof belongs to~\cite{NemhauserWolseyFisher78}. 
The present proof was adapted from the presentation in~\cite{KrauseGolovin14}.
\begin{theorem}\label{thm:greedy}
Consider a finite set $V = \{v_1, \ldots, v_\Vcard\}$ 
and assume $f:\pow(V) \to [0, \infty)$ is a monotone submodular 
function  with $f(\emptyset) = 0$. Let $\{S_l\}_{l=1}^k$ be produced by the greedy 
procedure~\eqref{equ:greedy}, for a given $k \in \{1, \ldots, \Vcard\}$.
Then,
\[
f(S_k) \geq (1-1/e) \max_{S \in \V_k}f(S),
\]
where $\V_k$ is the collection of subsets of 
$V$ with $k$ elements as defined in~\eqref{equ:Vk}.
\end{theorem}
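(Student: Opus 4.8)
The plan is to fix an optimal solution $S^* \in \argmax_{S \in \V_k} f(S)$, write $\mathrm{OPT} = f(S^*)$, and track how the gap $\delta_l \defeq \mathrm{OPT} - f(S_l)$ contracts across the greedy iterations. The crux of the whole argument is a single per-step inequality stating that the greedy gain at step $l+1$ recovers at least a $1/k$ fraction of the remaining gap:
\[
\mathrm{OPT} - f(S_l) \leq k\big(f(S_{l+1}) - f(S_l)\big), \qquad l = 0, \ldots, k-1.
\]
Once this is in hand, the rest of the proof is a routine linear recursion followed by a standard exponential estimate.

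To establish the per-step inequality I would chain together the hypotheses of the theorem in turn. First, monotonicity gives $f(S^*) \leq f(S^* \cup S_l)$, since $S^* \subseteq S^* \cup S_l$. Next, applying the discrete fundamental-theorem-of-calculus identity~\eqref{equ:ftc} with $A = S_l$ and $H = S^* \setminus S_l = \{u_1, \ldots, u_m\}$ (so that $m \leq k$) expands the right-hand side as
\[
f(S^* \cup S_l) = f(S_l) + \sum_{j=1}^m \deriv{f}{u_j}{S_l \cup \{u_1, \ldots, u_{j-1}\}}.
\]
Because $S_l \subseteq S_l \cup \{u_1, \ldots, u_{j-1}\}$, submodularity (Definition~\ref{def:submodular}) bounds each summand by $\deriv{f}{u_j}{S_l}$, and the greedy selection rule recorded in~\eqref{equ:greedy_basic_estimate} bounds each of these in turn by $\deriv{f}{v_{l+1}}{S_l} = f(S_{l+1}) - f(S_l)$. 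Summing the $m \leq k$ terms and combining with the monotonicity step yields exactly the claimed inequality.

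With the key inequality secured, I would rewrite it using $f(S_{l+1}) - f(S_l) = \delta_l - \delta_{l+1}$, which rearranges to $\delta_{l+1} \leq (1 - 1/k)\,\delta_l$. Iterating from $l = 0$ and using $\delta_0 = \mathrm{OPT} - f(\emptyset) = \mathrm{OPT}$ gives $\delta_k \leq (1 - 1/k)^k\,\mathrm{OPT}$. The elementary bound $(1 - 1/k)^k \leq e^{-1}$, valid for every positive integer $k$, then converts this into $f(S_k) = \mathrm{OPT} - \delta_k \geq (1 - 1/e)\,\mathrm{OPT}$, which is the assertion of the theorem.

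I expect the main obstacle to be the careful assembly of the per-step inequality, since it is the one place where monotonicity, the telescoping identity, submodularity, and the greedy rule must all be invoked in the correct order. The delicate point is the application of submodularity along the telescoping sum: the conditioning set $S_l \cup \{u_1, \ldots, u_{j-1}\}$ grows with $j$, so one must verify that it always contains $S_l$ in order to discard the extra elements legitimately. The subsequent recursion and the passage to the $(1 - 1/e)$ factor are entirely mechanical.
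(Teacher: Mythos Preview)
Your proposal is correct and follows essentially the same argument as the paper: monotonicity, the telescoping identity~\eqref{equ:ftc}, submodularity, and the greedy estimate~\eqref{equ:greedy_basic_estimate} are chained in exactly the same order to obtain $\delta_{l+1} \leq (1-1/k)\delta_l$, and the recursion is then unwound and bounded via $(1-1/k)^k \leq e^{-1}$. The only cosmetic difference is that you telescope over $S^* \setminus S_l$ (with $m \leq k$ terms) rather than over all of $S^*$; this is slightly cleaner for the submodularity step, but note that passing from $m$ terms to the bound $k(f(S_{l+1}) - f(S_l))$ tacitly uses $f(S_{l+1}) - f(S_l) \geq 0$, which follows from monotonicity.
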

\begin{proof}
The result holds trivially for $k = 1$. Thus, we assume $k > 1$.
Let $S^* \in \argmax_{S \in \V_k} f(S)$. Enumerate elements of 
$S^*$ as $S^* = \{s_1^*, s_2^*, \ldots, s_k^*\}$. We note that 
for each $l \in \{1, \ldots, k-1\}$,
\[
\begin{alignedat}{2}
f(S^*) 
&\leq f(S^* \cup S_l) &\quad &(\text{\color{blue}by monotonicity of $f$})\notag\\
&= f(S_l) + \sum_{j=1}^k \deriv{f}{s_j^*}{S_l \cup \{s_1^*, \ldots, s_{j-1}^*\}}
  &\quad &(\text{\color{blue}cf.~Lemma~\ref{lem:discrete-ftc}})\notag\\
&\leq f(S_l) + \sum_{j=1}^k \deriv{f}{s_j^*}{S_l} 
  &\quad &(\text{\color{blue}by submodularity of $f$})\notag\\
&\leq f(S_l) + \sum_{j=1}^k [f(S_{l+1}) - f(S_l)] 
  &\quad &(\text{\color{blue}cf.~\eqref{equ:greedy_basic_estimate}})\notag\\
&= f(S_l) + k [f(S_{l+1}) - f(S_l)]. 
\end{alignedat}
\]
Hence, 
$f(S^*) - f(S_l) \leq k [f(S_{l+1}) - f(S_l)]$.
Let $\delta_l \defeq f(S^*) - f(S_l)$ and note 
\begin{equation}\label{equ:improvement}
  \delta_l \leq k\,(\delta_{l} - \delta_{l+1}).
\end{equation}
This can be restated as $\delta_{l+1} \leq (1-1/k)\delta_l$.
Note also that 
$\delta_0 = f(S^*) - f(\emptyset) = f(S^*)$. Therefore, we have
\begin{equation}\label{equ:almost_there}
  \delta_{k} \leq  (1-1/k)^k \delta_0 =  (1-1/k)^k f(S^*) 
  \leq e^{-1}f(S^*).  
\end{equation}
In the last step, we have used the fact that $1-x \leq e^{-x}$ for every 
$x \in \R$. Substituting $\delta_k = f(S^*) - f(S_k)$
in~\eqref{equ:almost_there}, yields $f(S^*) - f(S_k) \leq  e^{-1}f(S^*)$. 
That is, 
\[
f(S_k) \geq (1-1/e)f(S^*),
\]
which is the desired result.
\end{proof}

\begin{remark}
In~\eqref{equ:improvement}, $\delta_l$ 
measures the gap between $f(S_l)$ and the optimal objective value. 
An interpretation of~\eqref{equ:improvement} is that the 
improvement in optimality gap, at the step $l$ of the 
greedy algorithm, is at least $\big(f(S^*) - f(S_{l-1})\big)/k$.
\end{remark}

\section{Two variants of the greedy method}
\label{sec:variants}

The greedy approach, while simple, can become 
prohibitive 
as the cardinality of $V$ grows. This is especially the 
case in problems of optimal sensor placement where each 
evaluation of $f$ might be expensive. 
Over the years, several 
variants of the greedy approach have been proposed to 
accelerate computations. Here, we discuss two such 
approaches: the lazy greedy and the stochastic greedy.

\subsection{Lazy greedy}
The idea behind the lazy greedy algorithm~\cite{Minoux78} is to make maximum 
use of submodularity to reduce the number of function evaluations. 
Recall that at the step $l$ of the greedy method, 
we find an element $v \in V$ with maximum 
marginal gain $\deriv{f}{v}{S_{l-1}}$; see~\eqref{equ:greedy}.
Moreover, by submodularity of $f$, we have that 
\[
\deriv{f}{v}{S_{l-1}} \geq \deriv{f}{v}{S_{l}}.
\]
Thus, instead of naively computing all the requisite marginal gains 
at each step of the greedy algorithm, we can 
use the already computed marginal gains as an upper bound 
for the subsequent ones. 

Let us briefly outline the lazy greedy process. 
Starting with $S_0 = \emptyset$, 
the first iteration of the lazy greedy method is 
the same as standard greedy. We compute 
$\rho(v) \defeq \deriv{f}{v^*}{S_{0}} = f(\{v\})$ for every $v \in V$. 
Subsequently, we select $s_1 \in V$ that maximizes 
$\rho(v)$ and let $S_1 = \{s_1\}$. 
Then, we sort the values of $\{\rho(v)\}_{v \in V \setminus S_1}$ 
in descending order. This sorted set of marginal 
gains will be maintained and updated in the subsequent 
iterations of the lazy greedy method. 
At the $l$th step of this method, we 
perform the following steps:
\begin{enumerate}[label=(\roman*)]
\item take an entry $v^* \in V \setminus S_{l-1}$ 
that maximizes $\rho(v)$;
\item compute $\deriv{f}{v^*}{S_{l-1}}$ and let 
$\rho(v^*) = \deriv{f}{v^*}{S_{l-1}}$;
\item if $v^*$ still maximizes $\{\rho(v)\}_{v \in V \setminus S_{l-1}}$
then let $S_l = S_{l-1}\cup\{v^*\}$ and go to step $l+1$ of the 
lazy greedy procedure.
Otherwise, go back to (i).
\end{enumerate}
While preserving the approximation guarantee of the standard greedy, the lazy
greedy procedure often provides massive improvements over the standard
greedy~\cite{LeskovecKrauseGuestrinEtAl07}.

\subsection{Stochastic greedy}
The stochastic greedy~\cite{MirzasoleimanBadanidiyuruKarbasiEtAl15} provides
further improvements to the standard greedy procedure and its lazy counterpart. 
Recall that in step $l$ of the standard greedy method, 
we need to compute the marginal gains corresponding to 
all elements in $V \setminus S_l$, which requires $\Vcard - l$ function evaluations.
The idea of stochastic greedy is to select a random sample from this set of $\Vcard - l$ 
elements and evaluate the marginal gain for this randomly chosen sample.
For clarity, we outline the stochastic greedy procedure in Algorithm~\ref{alg:stoch_greedy}.

\begin{algorithm}[H]
  \begin{algorithmic}[1]
  \STATE \textbf{Input:} monotone submodular function $f$, ground set $V$ of size $\Vcard$, size 
  $k$ of the desired subset of $V$, and sample size $s$. 
  \STATE \textbf{Output:} A near optimal set $A$ with $|S| = k$.
  \STATE $S = \emptyset$
  \FOR{$l = 1$ \TO $k$} 
  \STATE Draw a random sample set $R$ of size $s$ from $V \setminus S$
  \STATE $v_l = \argmax_{v \in R} \deriv{f}{v}{S}$ 
  \STATE $S = S \cup \{ v_l \}$.
  \ENDFOR
  \end{algorithmic}
  \caption{Stochastic greedy algorithm.}
  \label{alg:stoch_greedy}
\end{algorithm}

Note that due to its randomized nature, the output of
Algorithm~\ref{alg:stoch_greedy} is not deterministic.  Hence, the approximation
guarantee of the method is stated in expectation.  The following result
from~\cite{MirzasoleimanBadanidiyuruKarbasiEtAl15} makes matters precise. 
\begin{theorem}\label{thm:stoch_greedy}
Let $V$ be finite set with $\Vcard$ elements and 
assume 
$f:\pow(V) \to [0, \infty)$ is a monotone submodular function with $f(\emptyset) = 0$.
Let $\eps \in (0, 1)$ be given and  
$S$ be a set of cardinality $k$ obtained by performing $k$ steps of 
the stochastic greedy algorithm with a sample size of 
$s = \ceil{\frac{\Vcard}{k}\log\frac1\eps}$. Then, 
\[
\mathbb{E}\{ f(S_k) \} \geq (1-1/e-\eps) \max_{S \in \V_k}f(S),
\] 
\end{theorem}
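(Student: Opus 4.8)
The plan is to replicate the recursion behind Theorem~\ref{thm:greedy}, but carried out in expectation, replacing the deterministic per-step estimate~\eqref{equ:greedy_basic_estimate} with a probabilistic lower bound on the expected marginal gain of the sampled element. Fix $S^* \in \argmax_{S \in \V_k} f(S)$ and let $S_l$ denote the (random) set produced after $l$ steps of Algorithm~\ref{alg:stoch_greedy}, so that $S_l = S_{l-1} \cup \{v_l\}$ with $v_l = \argmax_{v \in R}\deriv{f}{v}{S_{l-1}}$. The crux of the argument is a one-step lemma stating that, conditioned on $S_{l-1}$,
\[
\mathbb{E}\!\left[\deriv{f}{v_l}{S_{l-1}} \mid S_{l-1}\right] \geq \frac{1-\eps}{k}\left(f(S^*) - f(S_{l-1})\right).
\]
Granting this, the remainder of the proof mirrors Theorem~\ref{thm:greedy} closely.

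To establish the one-step lemma I would set $B \defeq S^* \setminus S_{l-1}$ and $m \defeq |B| \leq k$. Since $v_l$ maximizes the marginal gain over the sample $R$ and marginal gains are nonnegative (monotonicity), $\deriv{f}{v_l}{S_{l-1}}$ dominates $\deriv{f}{u}{S_{l-1}}$ for any $u \in R \cap B$. The key probabilistic step is a symmetry argument: drawing $R$ uniformly from $V \setminus S_{l-1}$ and then selecting a uniform element $u$ of $R \cap B$ on the event $\{R \cap B \neq \emptyset\}$ produces a $u$ that is uniformly distributed over $B$, since all elements of $B \subseteq V \setminus S_{l-1}$ enter the sampling symmetrically. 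This yields
\[
\mathbb{E}\!\left[\deriv{f}{v_l}{S_{l-1}} \mid S_{l-1}\right] \geq \mathbb{P}[R \cap B \neq \emptyset]\cdot\frac{1}{m}\sum_{v \in B}\deriv{f}{v}{S_{l-1}}.
\]
To finish the lemma I would (i) bound the miss probability by $\mathbb{P}[R \cap B = \emptyset] \leq (1 - m/\Vcard)^s \leq e^{-ms/\Vcard}$ via the sampling-without-replacement product; (ii) observe that $m \mapsto (1 - e^{-ms/\Vcard})/m$ is nonincreasing, so the resulting coefficient is smallest at $m = k$, and invoke $s = \ceil{\frac{\Vcard}{k}\log\frac1\eps}$ to get $e^{-ks/\Vcard} \leq \eps$, producing the factor $(1-\eps)/k$; and (iii) bound $\sum_{v \in B}\deriv{f}{v}{S_{l-1}} \geq f(S^*) - f(S_{l-1})$ exactly as in the second and third displayed lines of the proof of Theorem~\ref{thm:greedy}, using~\eqref{equ:ftc}, submodularity, and then monotonicity (since $S_{l-1} \cup B \supseteq S^*$).

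With the lemma in hand, I would put $\delta_l \defeq f(S^*) - \mathbb{E}[f(S_l)]$ and take total expectations to obtain the stochastic counterpart of~\eqref{equ:improvement}, namely $\delta_l \leq \left(1 - \frac{1-\eps}{k}\right)\delta_{l-1}$. Iterating from $\delta_0 = f(S^*)$ and using $1 - x \leq e^{-x}$ gives $\delta_k \leq e^{-(1-\eps)} f(S^*)$, i.e.\ $\mathbb{E}[f(S_k)] \geq \left(1 - e^{-(1-\eps)}\right) f(S^*)$. The proof then closes with the elementary bound $e^{-(1-\eps)} \leq e^{-1} + \eps$ for $\eps \in (0,1)$ (equivalently $e^{\eps} - 1 \leq e\,\eps$, seen by integrating $e^t \leq e$ over $[0,\eps]$), which turns the estimate into $\mathbb{E}[f(S_k)] \geq (1 - 1/e - \eps) f(S^*) = (1 - 1/e - \eps)\max_{S \in \V_k} f(S)$.

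The main obstacle is the one-step lemma, and specifically the symmetry argument identifying the law of the representative element $u$ as uniform on $B$, together with the correct treatment of the without-replacement miss probability and the monotonicity reduction to $m = k$; once the per-step expected-gain inequality is secured, the recursion and the final scalar estimate are routine adaptations of the proof of Theorem~\ref{thm:greedy}.
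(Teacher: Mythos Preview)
The paper does not supply a proof of Theorem~\ref{thm:stoch_greedy}; it merely quotes the result from~\cite{MirzasoleimanBadanidiyuruKarbasiEtAl15}, so there is no in-paper argument to compare against. Your proposal reconstructs essentially the proof of that reference: a one-step expected-gain lemma combined with the recursion scheme of Theorem~\ref{thm:greedy}. The pieces are sound. The symmetry argument is valid because, conditioning on $S_{l-1}$, every $v\in B$ plays the same role in the sampling of $R$, hence $\mathbb{E}[\mathbf{1}_{\{v\in R\}}/|R\cap B|]$ is the same for all $v\in B$ and sums to $\mathbb{P}[R\cap B\neq\emptyset]$; this yields exactly your displayed lower bound. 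The miss-probability estimate $\mathbb{P}[R\cap B=\emptyset]\leq(1-m/\Vcard)^s\leq e^{-ms/\Vcard}$ is correct for sampling without replacement (each factor $(\,|V\setminus S_{l-1}|-m-i\,)/(|V\setminus S_{l-1}|-i)$ is at most $1-m/\Vcard$), and the monotonicity of $m\mapsto(1-e^{-ms/\Vcard})/m$ follows from $e^{-x}(1+x)\leq 1$. The telescoping/submodularity bound $\sum_{v\in B}\deriv{f}{v}{S_{l-1}}\geq f(S^*)-f(S_{l-1})$ is the same step used in the proof of Theorem~\ref{thm:greedy}. Finally, your closing inequality $e^{-(1-\eps)}\leq e^{-1}+\eps$ is established exactly as you indicate, via $e^{\eps}-1=\int_0^{\eps}e^{t}\,dt\leq e\,\eps$ for $\eps\in(0,1)$. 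In short, the proposal is correct and matches the standard proof; there is simply no paper-specific proof to contrast it with.
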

Note that to achieve the requisite approximation guarantee, the size $s$ of the
sample set needed in the stochastic greedy procedure is required to be $s =
\ceil{\frac{\Vcard}{k}\log\frac1\eps}$, as stated in Theorem~\ref{thm:stoch_greedy} above.\mysidenote{In this context $\Vcard$ is typically large and
the ratio $\Vcard / k$ is small.}  
Thus, the cost of performing $k$ steps of the
algorithm is $\ceil{\Vcard\log\frac1\eps}$ function evaluations. 
The stochastic greedy approach provides enormous computational savings over the
greedy approach and its lazy variant.  Furthermore, stochastic greedy often
provides solutions whose performance is close to those obtained from standard
greedy approach. Additionally, as discussed in 
\cite{MirzasoleimanBadanidiyuruKarbasiEtAl15} the stochastic greedy can be
made more efficient by incorporating lazy evaluations. 

\bibliographystyle{abbrv}
\bibliography{refs}

\end{document}